\newtheorem{theorem}{Theorem}[section]
\newtheorem{conjecture}[theorem]{Conjecture}
\newtheorem{proposition}[theorem]{Proposition}
\theoremstyle{remark}
\newtheorem{notation}[theorem]{Notation}
\newtheorem{remark}[theorem]{Remark}
\newtheorem{claim}{Claim}
\newcommand{\F}{\mathcal F}
\begin{document}

\makeatletter

\makeatother
\author{Ron Aharoni}
\address{Department of Mathematics\\ Technion, Haifa, Israel}
\email[Ron Aharoni]{raharoni@gmail.com}
\thanks{\noindent The research of the first author was
supported by BSF grant no. $2006099$, by GIF grant no. I
$-879-124.6/2005$, by the Technion's research promotion fund, and by
the Discont Bank chair.}

\author{Dani Kotlar}
\address{Computer Science Department, Tel-Hai college, Upper Galilee, Israel}
\email[Dani Kotlar]{dannykotlar@gmail.com}

\author{Ran Ziv}
\address{Computer Science Department, Tel-Hai college, Upper Galilee, Israel}
\email[Ran Ziv]{ranzivziv@gmail.com}

\title{Representation of large matchings in bipartite graphs}
\maketitle
\begin{abstract}
Let $f(n)$ be the smallest number such that every collection of $n$
matchings, each of size at least $f(n)$, in a bipartite graph, has a
full rainbow matching. Generalizing famous conjectures of Ryser,
Brualdi  and Stein,  Aharoni and Berger \cite{AhBer} conjectured
that $f(n)=n+1$ for every $n>1$.  Clemens and Ehrenm{\"u}ller
\cite{Cl-Eh} proved that $f(n) \le \frac{3}{2}n +o(n)$. We show that
the $o(n)$ term can be reduced to a constant, namely $f(n) \le
\lceil \frac{3}{2}n \rceil+1$.
\end{abstract}

\section{Introduction}

Given sets $F_1,F_2,\ldots,F_n$ of edges in a graph, a
\emph{(partial) rainbow matching} is a choice of disjoint edges from
some of the $F_i$s.  In other words, it is a partial choice function
whose range is a matching. If the rainbow matching represents all
$F_i$s then we say that it is \emph{full}. For a comprehensive
survey on rainbow matchings and the related subject of transversals in Latin squares see
\cite{Wan11}.

As in the abstract, we assume the graph is bipartite and define
$f(n)$ to be the  least number such that if $|F_i| \ge f(n)$ for all
$i=1,\ldots,n$, then there exists a full rainbow matching.
 A greedy choice of representatives shows that if $|F_i|\ge 2n-1$ for all $i=1,\ldots,n$
 then  there is a rainbow matching. Thus, $f(n)\le 2n-1$.
 On the other hand, for every $n>1$ there exits a  family $F_1, \ldots, F_n$ of matchings of size $n$ with no full rainbow matching:
 for an arbitrary $1 \le k \le n$  let $F_1, \ldots, F_{k}$ be all equal to the perfect matching in the cycle
$C_{2n}$ consisting of the odd edges,  and let $F_{k+1}, \ldots,
F_{n}$ be all equal to the perfect matching in $C_{2n}$ consisting
of the even edges. This shows
that $f(n)\ge n+1$ for all $n>1$ (in fact, this example can be modified to produce $2n-2$ matchings of size $n$ with no rainbow matching of size $n$). In \cite{AhBer} it was conjectured
that this bound is sharp:

\begin{conjecture}\label{main} \cite{AhBer} $f(n)=n+1$ for all $n>1$. \end{conjecture}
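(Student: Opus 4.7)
The statement is a conjecture whose full resolution would imply (via specialisation to Latin squares) the Ryser, Brualdi and Stein conjectures; what follows is therefore a plan of attack with the anticipated point of failure identified, rather than a claim of proof. The lower bound $f(n)\ge n+1$ being already furnished by the $C_{2n}$ example, the task reduces to showing $f(n)\le n+1$. I would proceed by induction on $n$ combined with a maximum-rainbow-matching augmentation argument: fix $F_1,\dots,F_n$ with $|F_i|\ge n+1$, choose a rainbow matching $R$ of maximum size, and assume some $F_j$ is unrepresented. By maximality every edge of $F_j$ meets $V(R)$, and since $|F_j|\ge n+1>|R|$, pigeonhole supplies an edge $r\in R$ representing some $F_{i(r)}$ whose endpoints are met by at least two distinct edges of $F_j$. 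The natural move is then to augment: delete $r$, insert an edge of $F_j$ meeting $r$, and search for a new representative of $F_{i(r)}$ outside the support of the modified rainbow matching, iterating if the first attempt is blocked.

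The technical framework for forcing such augmentations to succeed is topological: organise all rainbow matchings of the given size as simplices of an auxiliary independence-type complex, and apply an Aharoni--Berger--Ziv style connectivity bound to conclude that either an augmenting alternating structure exists or Hall's condition is certified by a small blocking set $B\subseteq V(R)$. In the blocker case, a careful count of how many $F_i$ must lie in $N(B)$, combined with $|F_i|\ge n+1$ and $|B|\le|R|<n$, should produce a contradiction. The greedy bound $f(n)\le 2n-1$ corresponds to swap depth zero and the Clemens--Ehrenm\"uller bound to essentially one level of augmentation; reaching $n+1$ demands alternating chains of unbounded depth, together with a much tighter control on how many of the $F_i$ can be simultaneously trapped inside $N(B)$.

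The main obstacle, and the reason the conjecture is still open, is that the relevant topological connectivity bounds appear to fail by exactly a multiplicative factor of $2$, which is precisely what produces the $\frac{3}{2}n$ (rather than $n$) term in every known upper bound, including the $\lceil\frac{3}{2}n\rceil+1$ established in this paper. Overcoming this obstruction would require either a strictly stronger connectivity theorem, or a combinatorial augmentation argument that exploits the \emph{matching} structure of each $F_i$ (and not merely its cardinality) to prevent the alternating chain from collapsing under its own weight. I would expect this step to demand a genuinely new idea rather than a refinement of existing techniques, and in its absence a realistic intermediate target is to replace the coefficient $\frac{3}{2}$ by some $1+\varepsilon$; the induction-plus-augmentation scheme sketched above is the natural vehicle for such a partial result as well.
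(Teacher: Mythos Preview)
You correctly recognise that this statement is an open conjecture which the paper does not prove; the paper's contribution is Theorem~\ref{thm:3:2}, the bound $f(n)\le\lceil\frac{3}{2}n\rceil+1$, with Conjecture~\ref{main} stated only as context and motivation. Your submission is therefore not a proof but an honest outline of a programme together with its expected point of failure, and as such there is nothing to fault in its logic.

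One remark on the diagnosis: the paper's argument for the $\frac{3}{2}n$ bound is purely combinatorial, with no topological connectivity input. The factor of $\frac{1}{2}$ lost relative to the conjecture does not arise here from a connectivity theorem that is off by a factor of two, but from a direct count: an edge of the unrepresented matching $F_n$ with one endpoint free is blocked at the other endpoint by some $r_i\in R$, and freeing $r_i$ requires a spare edge of $F_i$ into the unmatched side, which costs roughly $n/2$ extra edges per matching. The paper pushes this one level further (building $\mathcal{F}'\subset\mathcal{F}^*$ and chasing alternating edges in $E(X,W^*)$ until a cycle or a sufficiently long chain forces an augmentation), but the underlying arithmetic is the same. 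Your broader point, that closing the gap to $n+1$ seems to demand a new idea exploiting the matching structure of the $F_i$ and not merely their cardinality, is consistent with the state of the art.
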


If true, this would easily imply:

 \begin{conjecture}\label{conj3}
A family of $n$ matchings in a bipartite graph, each of size $n$,  has a rainbow matching of size $n-1$.
\end{conjecture}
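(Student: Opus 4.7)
The plan is to deduce Conjecture \ref{conj3} directly from Conjecture \ref{main} applied to a suitable subfamily of size $n-1$. The key arithmetic observation is that, under Conjecture \ref{main}, $f(n-1) = n$ for $n-1 > 1$, which is exactly the common size of the given matchings.

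Concretely, I would dispatch the trivial cases $n \le 2$ first: a rainbow matching of size $n-1 \le 1$ is produced by picking any edge of any $F_i$ (or the empty matching when $n=1$). For $n \ge 3$, I would discard an arbitrary matching, say $F_n$, and apply Conjecture \ref{main} with parameter $n-1$ to the remaining family $F_1, \ldots, F_{n-1}$. Each of these $n-1$ matchings has size $n \ge f(n-1) = n$, so Conjecture \ref{main} produces a full rainbow matching of the subfamily, and this is a rainbow matching of size $n-1$ for the original family.

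The ``main obstacle'' is essentially nonexistent: once one notes that $f(n-1) = n$ is precisely the threshold needed, the implication is tautological, which matches the paper's assessment that it is ``easy''. Any genuine difficulty lies not in deriving Conjecture \ref{conj3} from Conjecture \ref{main} but in establishing Conjecture \ref{main} itself (or at least the weaker statement $f(n-1) \le n$) --- which is not addressed in this reduction.
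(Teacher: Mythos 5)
Your reduction is exactly the implication the paper has in mind when it says Conjecture~\ref{main} ``would easily imply'' Conjecture~\ref{conj3}: discard one matching, note $f(n-1)=n$ for $n\ge 3$, apply the definition of $f$ to the remaining $n-1$ matchings of size $n$, and handle $n\le 2$ by hand. The arithmetic and the small cases are handled correctly, so as a proof of the implication ``Conjecture~\ref{main} $\Rightarrow$ Conjecture~\ref{conj3}'' your write-up is fine and matches the paper's (one-line, implicit) reasoning.

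However, be clear that this is not a proof of the statement itself, and the paper does not claim one: Conjecture~\ref{conj3} is stated as a conjecture and remains open, because your argument is conditional on Conjecture~\ref{main}, which is also open. The unconditional results available in the paper are too weak to feed your reduction: Theorem~\ref{thm:3:2} gives only $f(n-1)\le\lceil\tfrac{3}{2}(n-1)\rceil+1$, and $n<\lceil\tfrac{3}{2}(n-1)\rceil+1$ for all $n>1$, so matchings of size $n$ do not meet that threshold; similarly Proposition~\ref{prop1} requires size $\lfloor\tfrac{3}{2}n\rfloor$, which again exceeds $n$. You acknowledge this yourself in your last paragraph, which is the right assessment --- the content of your submission is the easy conditional step, and the genuine difficulty (proving Conjecture~\ref{main}, or even just $f(n-1)\le n$) is untouched.
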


This strengthens a famous conjecture of Ryser-Brualdi-Stein.

\begin{conjecture}\label{conj1}\cite{BruRys,Ryser67,Stein75}
A partition of the edges of the complete bipartite graph $K_{n,n}$ into $n$ matchings, each of size $n$, has a rainbow matching of size $n-1$.
\end{conjecture}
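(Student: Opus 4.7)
The plan is to reduce the conjecture to the partition structure of $K_{n,n}$ rather than treat it as a special case of the general $f(n)$ bound. Note first that the main result $f(n)\le \lceil 3n/2\rceil+1$ does not imply the conjecture directly: applying it to any $n-1$ of the $n$ matchings would require $n\ge \lceil 3(n-1)/2\rceil+1$, which fails already for moderate $n$. So I must use the extra fact that the matchings form a proper edge coloring of $K_{n,n}$, i.e., every vertex meets every color exactly once.

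Let $R$ be a maximum partial rainbow matching; suppose for contradiction $|R|=r\le n-2$. Let $U_A,U_B$ be the uncovered vertices on the two sides (so $|U_A|=|U_B|=n-r\ge 2$), let $C$ be the set of colors used by $R$, and let $\bar C$ be the unused colors, $|\bar C|=n-r$. First, by maximality, no color of $\bar C$ has an edge inside $U_A\times U_B$: such an edge would extend $R$. Since each color in $\bar C$ is a perfect matching of $K_{n,n}$, this forces all $n-r$ edges of that color that are incident to $U_A$ to terminate in $V(R)\cap B$, and symmetrically on the other side. Second, I would build ``short'' alternating augmentations: pick $e\in\bar C$ with one endpoint $a\in U_A$ and the other endpoint $b\in V(R)\cap B$; let $e'=(a',b)$ be the edge of $R$ meeting $b$, of some color $c'\in C$. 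To augment, one seeks a replacement edge for color $c'$ disjoint from $R\cup\{e\}$ and with at least one endpoint in $U_A\cup U_B$; if two such independent augmentations can be performed simultaneously, one replaces $e'$ and uses $e$, gaining one edge.

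The main step — and the main obstacle — is to guarantee the existence of such short augmentations by double counting. Every vertex in $U_A\cup U_B$ meets every color exactly once, which gives $2(n-r)\cdot r$ incidences between $U_A\cup U_B$ and the colors of $C$; since $R$ itself accounts for only $2r$ of those incidences (the two endpoints of each $R$-edge lie in $V(R)$, not in $U$), the remaining $2(n-r)r-0$ incidences are distributed among edges from $U$ to $V(R)$ whose color lies in $C$. A pigeonhole on these incidences, together with the restriction on $\bar C$-edges established above, should produce two color-$c'$ edges that together with some $e\in\bar C$ form an augmenting alternating path of length $3$. The hard part is handling the case where all these incidences pile onto a small number of $R$-edges, which is exactly where the Ryser-Brualdi-Stein conjecture has resisted attack for decades; there the best one currently obtains is $n-O(\log^2 n)$ (Hatami-Shor), and pushing the bound all the way to $n-1$ appears to require a genuinely new structural invariant of Latin squares that the $f(n)$ machinery in this paper does not provide.
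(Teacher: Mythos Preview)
The statement you are trying to prove is Conjecture~1.3, the Ryser--Brualdi--Stein conjecture. The paper does \emph{not} prove this statement; it is listed precisely as an open conjecture, and the main theorem of the paper ($f(n)\le \lceil 3n/2\rceil+1$) is far weaker than what would be needed. So there is no ``paper's own proof'' to compare against.

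Your proposal is also not a proof, and you say so yourself. The first two paragraphs set up the standard augmenting-path framework correctly: maximality of $R$ forbids $\bar C$-edges inside $U_A\times U_B$, and the double-counting of incidences between uncovered vertices and used colors is the natural next move. But the final paragraph concedes exactly the gap that makes this a famous open problem: when the $C$-incidences concentrate on few $R$-edges, the pigeonhole does not yield the two disjoint length-$3$ augmentations you need, and you explicitly fall back to citing the Hatami--Shor $n-O(\log^2 n)$ bound. That is an honest assessment of where the approach stalls, but it means what you have written is a sketch of why the problem is hard, not a proof of the conjecture.

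In short: there is nothing to compare, because neither the paper nor your proposal proves the statement. If the assignment was to reproduce the paper's argument for this item, the correct answer is simply that the paper offers none.
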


Another strengthening of the last conjecture is due to Stein:

\begin{conjecture}\cite{Stein75} \label{conj2}
A partition of the edges of the complete bipartite graph $K_{n,n}$ into n subsets, each of size $n$, has a rainbow matching of size $n-1$.
\end{conjecture}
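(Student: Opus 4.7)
The plan is a maximum-rainbow-matching argument combined with a global count over the partition. Suppose toward contradiction that $F_1,\dots,F_n$ is a partition of $E(K_{n,n})$ into $n$ sets of size $n$ admitting no rainbow matching of size $n-1$, and let $R$ be a maximum rainbow matching with $r:=|R|\le n-2$. Let $S\subseteq[n]$ denote the colors unused in $R$ (so $|S|\ge 2$), and write $V(R)=V_A\cup V_B$ with $|V_A|=|V_B|=r$ in the two parts $A,B$ of $K_{n,n}$.

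By maximality of $R$, every edge of each $F_i$ with $i\in S$ must meet $V(R)$; hence $\bigcup_{i\in S}F_i$ is a family of at least $2n$ edges incident to the $2r\le 2n-4$ vertices of $V(R)$. I would then exploit local swaps. For $i\in S$ and $e=uv\in F_i$ with, say, $v\in V_B$, the edge $e'$ of $R$ covering $v$ has some color $j\notin S$; replacing $e'$ by $e$ preserves size but frees color $j$, so by maximality every edge of $F_j$ must meet $(V(R)\setminus\{v\})\cup\{u\}$. Aggregating these ``forbidden-location'' conditions over all valid choices of $e$ should force, for many represented colors $j$, a strong concentration of $F_j$ at a bounded vertex set.

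Next I would combine this with the partition constraint. Since $\sum_{v}d_{F_i}(v)=2n$ for every $i$, each $F_i$ is, on average, a matching, so I would classify the parts by whether the maximum matching $\mu_i\subseteq F_i$ is large (``matching-rich'') or small (``star-like''). On the matching-rich sub-family I would try to invoke (an analogue of) the bound $f(n)\le\lceil 3n/2\rceil+1$ proved in this paper to extract a large rainbow matching using those parts alone; the star-like parts, each essentially concentrated at one vertex, would then be incorporated one at a time, using the fact that a star $F_i$ centered at $v$ can be represented by any edge at $v$ not yet blocked, and the number of edges blocking such a representation is controlled by the swap restrictions derived in the previous step.

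The decisive difficulty, and what makes this a genuine conjecture rather than a corollary of the matching case, is the extremal regime in which many $F_i$ are star-like: a single vertex of $A$ carries $n$ edges, so it is entirely possible for $F_1,\dots,F_n$ each to be a star at a distinct vertex of $A$, in which case the matching-based theorems give no direct information and the problem reduces to a delicate system-of-distinct-representatives question on the incidence matrix. I expect bridging the matching-rich and star-like regimes to be the crux, and to require either a topological connectivity argument in the spirit of \cite{AhBer} or a fractional/LP-relaxation tailored to stars, rather than a direct reduction to Conjecture \ref{main}.
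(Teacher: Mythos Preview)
The statement you are attempting to prove is labeled \emph{Conjecture} in the paper, and with good reason: it is Stein's 1975 conjecture, still open, and the paper offers no proof of it. There is therefore nothing to compare your proposal against; the paper simply records the conjecture as motivation and context for Theorem~\ref{thm:3:2}.

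Your write-up is candid about its own status: it is a plan, not a proof, and you explicitly identify the ``decisive difficulty'' as unresolved. That assessment is accurate. The first two paragraphs (maximality, local swaps) are standard opening moves and are fine as far as they go, but they do not produce a contradiction; the proposed dichotomy into ``matching-rich'' and ``star-like'' parts is where the argument becomes speculative. In particular, invoking the bound $f(n)\le\lceil 3n/2\rceil+1$ on the matching-rich sub-family cannot work directly, since that bound requires each matching to have size roughly $3n/2$, whereas here every $F_i$ has size exactly $n$.

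One small correction to your final paragraph: the specific extremal example you describe --- each $F_i$ a star centred at a distinct vertex $a_i\in A$ --- is not hard but trivial. Since $a_i$ has exactly $n$ incident edges in $K_{n,n}$ and $|F_i|=n$, such an $F_i$ must consist of \emph{all} edges at $a_i$, and then any perfect matching of $K_{n,n}$ is a full rainbow matching. The genuinely difficult configurations are those in which the $F_i$ are neither matchings nor full stars, and your sketch does not yet engage with them.
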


In our terminology, the weaker condition that Stein demands on sets $F_i$ is not that they are matchings, but that each has degree at  most $1$ in one side of the graph, and that  jointly their degree at each vertex in the other side is at most $n$. Possibly the `right' requirement is even more general: that the  degree at each vertex is at most $n$, and that each $F_i$ is a set, and not a multiset, namely it does not contain repeating edges.

Successive improvements on the trivial bound $f(n)\le 2n-1$
were $f(n)\le\lfloor\frac{7}{4}n\rfloor$
\cite{ACH}, $f(n)\le \lfloor\frac{5}{3}n\rfloor$ \cite{KZ-EJC}  and
$f(n)\le\lfloor\frac{3}{2}n\rfloor + o(n)$ \cite{Cl-Eh}. The latter
was extended in \cite{cep} to general graphs, and to the more
general case in which the sets $F_i$ are not assumed to be matchings, but disjoint unions
of cliques,  each containing $3n+o(n)$ vertices.
Pokrovskiy \cite{Pok} showed that if we add the requirement
that the $n$ matchings are edge disjoint, then $|F_i|\ge n +o(n)$
suffices. In this note we prove:

\begin{theorem}\label{thm:3:2}
$f(n)\le\lceil\frac{3}{2}n\rceil +1$. \end{theorem}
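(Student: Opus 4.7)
My plan is to follow the augmentation framework that underlies \cite{KZ-EJC} and \cite{Cl-Eh}, sharpening the counting until the $o(n)$ slack collapses to the constant $+1$. The argument proceeds by induction on $n$, with the small base cases covered by the already-available $\lfloor 5n/3\rfloor$-bound from \cite{KZ-EJC}.

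Given matchings $F_1,\dots,F_n$ in a bipartite graph with $|F_i|\ge m:=\lceil 3n/2\rceil+1$, the inductive hypothesis (applied to $F_1,\dots,F_{n-1}$, whose sizes satisfy $m\ge \lceil 3(n-1)/2\rceil+1$) furnishes a rainbow matching $R=\{e_1,\dots,e_{n-1}\}$ with $e_i\in F_i$. Set $U=V(R)$, so $|U|=2n-2$, and suppose for contradiction that $R$ cannot be extended to represent $F_n$. Then every edge of $F_n$ meets $U$. Splitting $F_n$ into the \emph{singly bound} edges (exactly one endpoint in $U$) and the \emph{doubly bound} edges (both endpoints in $U$) and calling their counts $s,d$, the matching property of $F_n$ yields $s+2d\le 2n-2$ while $s+d\ge m$, so $s\ge 2m-(2n-2)\ge n+4$. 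Assigning each singly bound $e=(u,v)$ its \emph{source} index $\sigma(e)=i$ defined by $u\in V(e_i)$, at least $\lceil s/2\rceil\ge\lceil n/2\rceil+2$ distinct sources occur.

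The augmentation manoeuvre is a \emph{double swap}: choose singly bound edges $e,e'\in F_n$ with distinct sources $i,j$, and simultaneously replace $e_i,e_j$ by new edges $e_i'\in F_i$ and $e_j'\in F_j$ while adding $e,e'$ to the matching. For this to yield a rainbow matching of size $n$ (contradicting the inductive maximality), $e_i'$ must avoid $V(R\setminus\{e_i,e_j\})\cup\{v,v'\}\cup V(e_j')$, a set of at most $2(n-3)+2+2=2n-4$ vertices, and similarly for $e_j'$. Since $F_i$ is a matching, at most $2n-4$ of its edges lie in this forbidden set; the pool of admissible $e_i'$ is nonempty whenever $m>2n-4$, and for larger $n$ one must instead exploit the strong structural constraints that arise when the double swap fails for \emph{every} pair, namely that almost every edge of $F_i$ has an endpoint in the specific set $V(R\setminus\{e_i\})\cup\{v\}$.

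The heart of the proof is to push these structural constraints to a contradiction. My plan is to run a global double-counting over the pairs of singly bound edges with distinct sources, arguing that the number of such pairs for which the double swap fails is strictly less than the total number available (at least $\binom{\lceil n/2\rceil+2}{2}$). This uses the matching property of each $F_i$ together with the precise lower bound on $|F_i|$ to derive the sharp estimate. The \textbf{main obstacle} is to execute this pigeonhole with constant, rather than $o(n)$, loss: specifically, to handle the extremal configurations in which the blocker edges of the $F_i$'s are maximally packed around $R$. It is at this accounting step that the proof must sharpen \cite{Cl-Eh}, and where the precise constant $+1$ in $\lceil 3n/2\rceil+1$ is exploited.
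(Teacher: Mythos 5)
Your write-up sets up the standard framework correctly (a rainbow matching $R$ of size $n-1$, the count of singly bound edges of $F_n$ giving $s\ge n+4$ and hence at least $\lceil n/2\rceil+2$ distinct sources), but it stops exactly where the theorem becomes hard. The last paragraph is a plan, not a proof: ``run a global double-counting over the pairs of singly bound edges with distinct sources'' is announced, no inequality is actually formulated or proved, and you yourself name the extremal packed configurations as ``the main obstacle'' without resolving them. Everything before that point is essentially the common opening of \cite{ACH}, \cite{KZ-EJC} and \cite{Cl-Eh}; the content of the present theorem lies entirely in the step you leave open, so the proposal does not establish $f(n)\le\lceil\frac{3}{2}n\rceil+1$.

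There are also concrete problems with the manoeuvre you do describe. As written, the ``double swap'' inserts two edges $e,e'$ of $F_n$ into the new matching, which then has $n+1$ edges and represents $F_n$ twice, so it is not a rainbow matching of size $n$ (and the forbidden-vertex count is $2(n-3)+2+2=2n-2$, not $2n-4$). More fundamentally, exchanges that delete only a bounded number of edges of $R$ are precisely what give the older bounds $\lfloor\frac{7}{4}n\rfloor$ and $\lfloor\frac{5}{3}n\rfloor$; to reach $\frac{3}{2}n+O(1)$ the paper has to restrict to the edges of $F_n$ free on one fixed side ($F_n^Y$), build a two-level structure $\mathcal{F}'\subset\mathcal{F}^*=\mathcal{F}'\cup\mathcal{F}''$ with $d=|\mathcal{F}''|$ not bounded by a constant, prove quantitative claims such as $|F_i\cap E(X,W^*)|\ge d+3$, and then run an alternating cascade $f_1,r_2,f_2,\dots$ of length $m=d+3$ that is closed off by a case analysis (using the edges $e_i\in F_n^Y$ and $e\in F_i^Y$). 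Your pair-counting plan over depth-two swaps offers no substitute for this unbounded-depth augmentation, and no evidence is given that the number of failed pairs can be forced below $\binom{\lceil n/2\rceil+2}{2}$ under the hypothesis $|F_i|\ge\lceil\frac{3}{2}n\rceil+1$. So the core of the argument is missing, and the route sketched is unlikely to close the gap as stated.
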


\section{Proof of Theorem~\ref{thm:3:2}}\label{three-halves}

 The following was shown in \cite{KZ-EJC}:

\begin{proposition}\label{prop1}
A family $\F =\{F_1,\ldots,F_n\}$  of $n$ matchings in a bipartite graph, each of size  at least $\lfloor\frac{3}{2}n\rfloor$,  has a rainbow matching of size $n-1$.
\end{proposition}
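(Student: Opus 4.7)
\medskip
\noindent\emph{Proof plan.}
I argue by contradiction. Assume no rainbow matching of size $n-1$ exists and let $R$ be a rainbow matching of maximum size $k$, so $k\le n-2$. After relabeling, write $R=\{e_1,\dots,e_k\}$ with $e_i\in F_i$ and $e_i=m_iw_i$, and let $F_{n-1}$ and $F_n$ be two matchings unrepresented in $R$. Maximality of $R$ implies that every edge of $F_{n-1}$ and $F_n$ meets $V(R)=\{m_1,w_1,\dots,m_k,w_k\}$; since they are matchings, each has at most $k$ edges entirely inside $V(R)$, hence at least $\lfloor\tfrac{3}{2}n\rfloor-k$ \emph{boundary edges} (exactly one endpoint in $V(R)$), split into type-$W$ (free end on the $W$-side) and type-$M$ with at most $k$ of each type.

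The central augmentation lemma is this: if $f=m_iv\in F_{n-1}$ is a boundary edge ($v\notin V(R)$) and some $e'\in F_i$ is disjoint from $V(R)\cup\{v\}$, then $(R\setminus\{e_i\})\cup\{e',f\}$ is a rainbow matching of size $k+1$, contradicting maximality. Since $F_i$ is a matching, at most $2k+1$ of its edges can meet $V(R)\cup\{v\}$, so whenever $F_{n-1}$ has a boundary edge at $m_i$ one must have $|F_i|\le 2k+1$. Symmetric bounds hold with $w_i$ in place of $m_i$ and with $F_n$ in place of $F_{n-1}$. Together with $|F_i|\ge\lfloor\tfrac{3}{2}n\rfloor$ and $k\le n-2$ this already contradicts the assumption for the smallest values of $n$.

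For larger $n$ the single-swap bound must be sharpened by using both $F_{n-1}$ and $F_n$ jointly. A first refinement: if both matchings attach a type-$W$ boundary edge to the same vertex $m_i$, with distinct free endpoints $u\ne v$, then no $F_i$-edge is disjoint from $V(R)\cup\{u,v\}$ (else single-swap succeeds), forcing $|F_i|\le 2k+2$. Iterating this forbids most shared attachments between $F_{n-1}$ and $F_n$, and summing over all $i$ one obtains joint constraints linking $|F_{n-1}|,|F_n|$ and $k$ that strengthen the single-swap bound.

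The technical heart of the argument, and the main obstacle, is pushing this joint analysis all the way to $k\ge n-1$. The right vehicle is a \emph{double-swap augmentation}: find an index $i$ together with boundary edges $f=m_iu\in F_{n-1}$ and $g=w_iv\in F_n$, plus an $F_i$-edge $e'$ disjoint from $V(R)\cup\{u,v\}$, producing a rainbow matching of size $k+2$. Enumerating the blocking obstructions — $e'$ does not exist, or $u$ or $v$ coincides with the free endpoint of a forced candidate $F_i$-edge, or $F_{n-1}$- and $F_n$-edges mutually block one another at $m_i$ or $w_i$ — and summing these obstructions over all $i\le k$, the combined lower bounds $|F_{n-1}|,|F_n|\ge\lfloor\tfrac{3}{2}n\rfloor$ exceed the total blocking capacity, forcing at least one unblocked configuration and yielding the contradiction. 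The precise threshold $\lfloor\tfrac{3}{2}n\rfloor$ is what makes this counting balance close to equality.
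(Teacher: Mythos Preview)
The paper does not prove this proposition; it is quoted from \cite{KZ-EJC} as a prior result, so there is no in-paper argument to compare against.

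On its own terms your proposal is a plan, not a proof, and the gap is real. The single-swap in your second paragraph is correct but, as you concede, only yields $|F_i|\le 2k+1$ at the relevant indices, hence roughly $k\ge 3n/4$, far short of $k\ge n-1$. The last two paragraphs, where the actual proof must live, merely announce that a count of ``blocking obstructions'' will close the gap; you never write down the count or verify any inequality. Worse, the double-swap as stated is not strong enough to finish. With $k=n-2$ each of $F_{n-1},F_n$ has at least $\lfloor 3n/2\rfloor-k=\lfloor n/2\rfloor+2$ boundary edges of \emph{each} type, so by inclusion--exclusion at least five or six indices $i$ admit both $f=m_iu\in F_{n-1}$ and $g=w_iv\in F_n$; but at every such $i$ you then need an $F_i$-edge avoiding the $2k+2=2n-2$ vertices of $V(R)\cup\{u,v\}$, and the hypothesis $|F_i|\ge\lfloor 3n/2\rfloor$ does not guarantee this once $n\ge 4$ (you would need $|F_i|\ge 2n-1$). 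So the mechanism you describe does not by itself produce the contradiction, and the missing ``precise counting'' is not a detail but the entire argument. The proof in \cite{KZ-EJC} uses longer augmenting structures and a global accounting that you have not reproduced.
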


\begin{proof}[Proof of Theorem~\ref{thm:3:2}]
Let $G$ be the given bipartite graph and let $U,W\subset V(G)$ be the two sides of $G$. Let $\F =\{F_1,\ldots,F_n\}$ be a family of matchings in $G$, each of size at least $\lceil\frac{3}{2}n\rceil +1$, and
let $R$ be a rainbow matching of maximal size. By Proposition \ref{prop1}, $|R| \ge n-1$. We assume, for contradiction, that  $|R|=n-1$ and without loss of generality we may assume that $R\cap F_n=\emptyset$. For each $i=1,\ldots,n-1$ let $F_i\cap R=\{r_i\}$ and let $r_i=\{u_i,w_i\}$, where $u_i\in U$ and $w_i\in W$. Let $X\subset U$ and $Y\subset W$ be the sets of vertices of $G$ not matched by $R$. We shall use the following notation:

\begin{notation}
For any two sets of vertices $A\subseteq U$ and $B\subseteq W$ we denote by $E(A,B)$ or $E(B,A)$ the set of edges in $E(G)$ with one endpoint in $A$ and the other endpoint in $B$.
\end{notation}

Let $F_n^Y$ be the subset of $F_n$ consisting of edges matching vertices in $Y$. Since $R$ has maximal size, $F_n^Y\subset E(Y,U\setminus X)$.
Let $U'$ be the set of vertices in $U\setminus X$ that are endpoints of the edges in $F_n^Y$. Let $R'$ be the subset of $R$ that matches the vertices in $U'$, and let $W'$ be the set of vertices in $W$ that are endpoints of edges in $R'$ (the set $U'$ is matched by $R'$ to $W'$). The main idea of the proof is to replace some edges in $R'$ by edges in $E(X,W\setminus Y)$, thus freeing vertices in $U'$. This will allow us to add an edge from $F_n^Y$ to the rainbow matching.

Let $\ell=|F_n^Y|$. Since $|W\setminus Y| = n-1$ and $|F_n|\ge \lceil 3n/2\rceil+1$ we have $\ell\ge \lceil n/2\rceil+2$. By possibly ignoring some edges of $F_n$ we shall assume that
\begin{equation}\label{eq1:0}
    \ell = \lceil n/2\rceil+2.
\end{equation}
So,
\begin{equation}\label{eq1:1}
    |U'|=|W'|=|R'| = \lceil n/2\rceil+2.
\end{equation}
Define,
\begin{equation*}
    \F'=\{F_i\in\F | F_i\cap R'\ne \emptyset\}.
\end{equation*}

That is, $\F'$ consists of the matchings that are represented in the partial rainbow matching $R'$.

\begin{notation}
For each $F_i\in\F'$ let $e_i$ be the edge of $F_n^Y$ such that $e_i\cap r_i\ne \emptyset$. Let $y_i$ be the endpoint of $e_i$ in $Y$ (Figure~\ref{fig1}).

\end{notation}

\begin{figure}[h!]
\begin{center}
\includegraphics[scale=0.2]{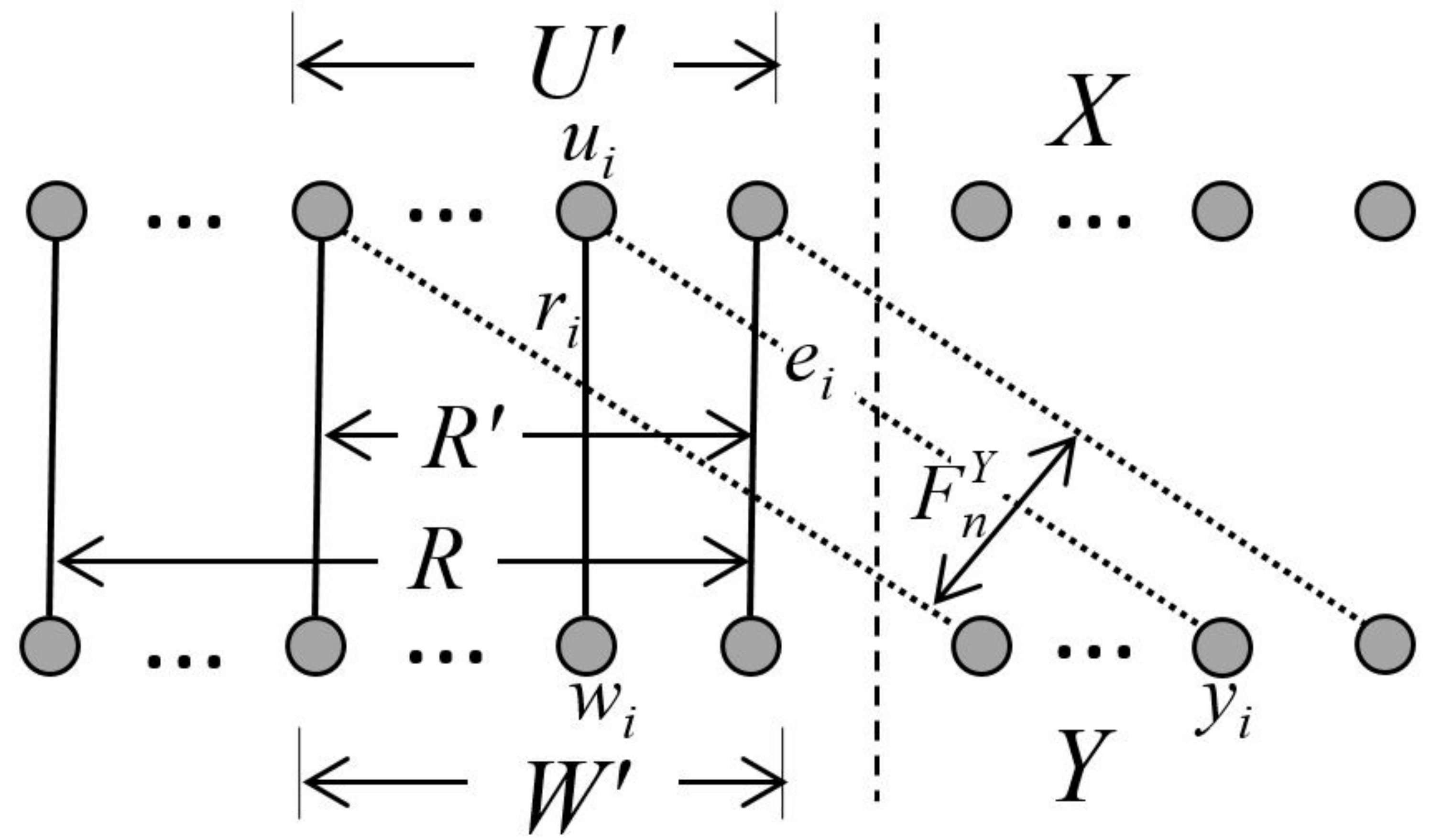}
\end{center}
\caption{}
\label{fig1}
\end{figure}

\begin{claim}\label{claim:1}
For each $F_i\in\F'$, we have $|F_i\cap E(X, W\setminus Y)|\ge\lceil n/2\rceil+1$ and $|F_i\cap E(Y, U\setminus X)|\ge \lceil n/2\rceil+1$.
\end{claim}

\begin{proof}
We show that each $F_i\in\F'$ has at most one edge between $X$ and $Y$.
Suppose $F_i$ has two edges $e$ and $f$ between $X$ and $Y$. The edge $e_i$ is disjoint from one of them, say $e$. Thus, $\left(R\setminus \{r_i\}\right)\cup\{e_i,e\}$ is a rainbow matching of size $n$, contradicting the maximality of $R$ (Figure~\ref{fig2a}).
\end{proof}

\begin{figure}[h!]
\begin{center}
\includegraphics[scale=0.2]{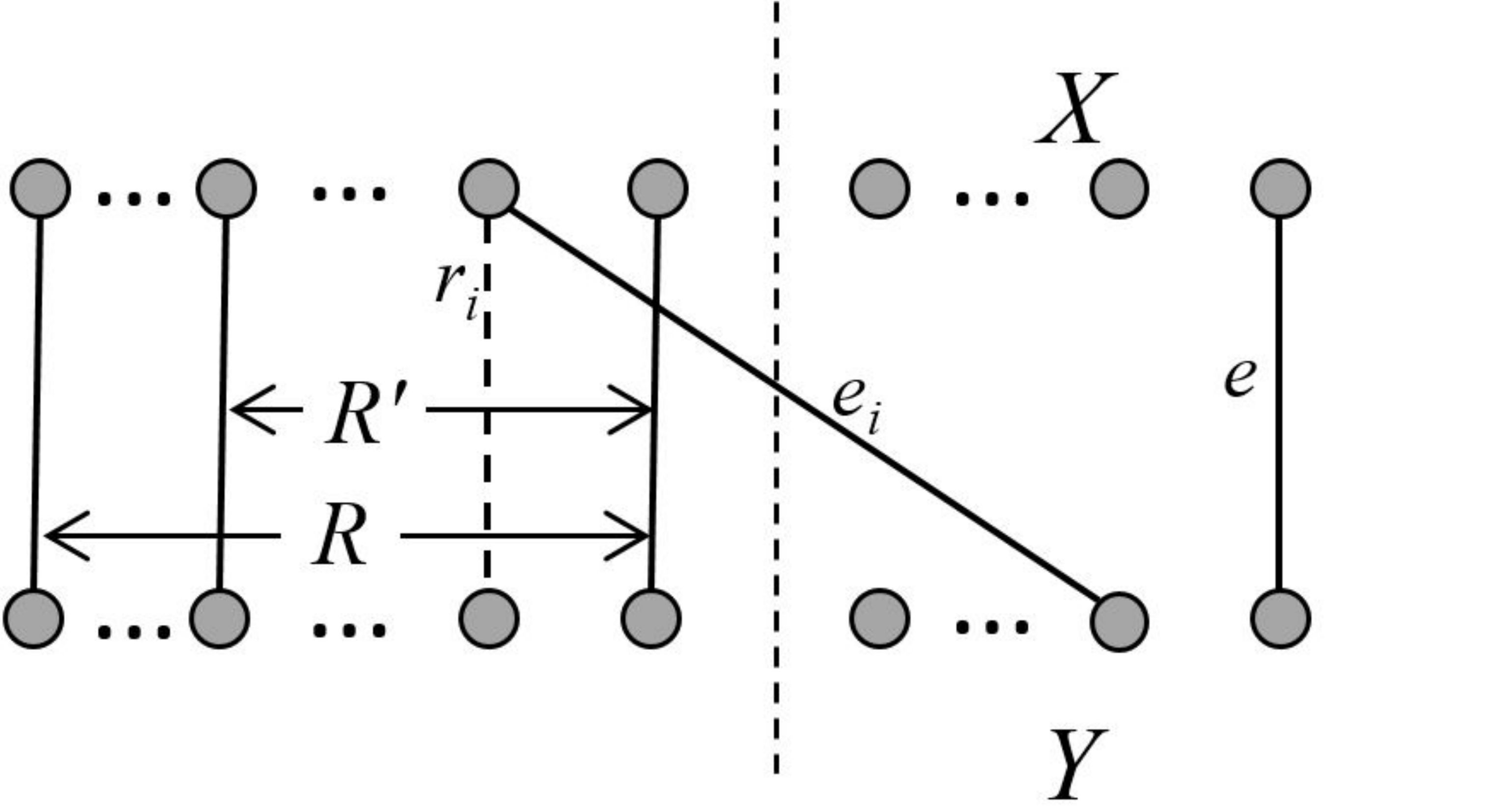}
\end{center}
\caption{}
\label{fig2a}
\end{figure}

\begin{remark} In all the figures, dashed lines represent edges that are candidates to be removed from the rainbow matching, and solid and dotted lines represent edges that are candidates for being added in.\end{remark}


\begin{notation}\label{not:1}
For each $F_i\in\F'$ we denote $F_i^Y=F_i\cap E(Y\setminus\{y_i\},U\setminus X)$. Let
$U^*$ be the union of $U'$ and the set of vertices in $U\setminus X$ that are endpoints of edges in $\bigcup \{F_i^Y \mid F_i \in \F'\}$. Let $R^*$ be the subset of $R$ that matches the elements in $U^*$ and let $W^*$ be the set of vertices in $W$ that are matched by $R^*$. We define

\begin{equation*}
    \F^*=\{F_j\in\F | F_j\cap R^*\ne \emptyset\}.
\end{equation*}

(Note that $U'\subset U^*\subset U\setminus X$, $W'\subset W^*\subset W\setminus Y$, $R'\subset R^*\subset R$ and $\F'\subset \F^*\subset \F$.) Let $\F''=\F^*\setminus\F'$ and let $d=|\F''|$ (it is possible that $d=0$).
\end{notation}

\begin{claim}\label{claim:2}
For each $F_j\in\F''$, $|F_j\cap E(X, W\setminus Y)|\ge\lceil n/2\rceil$ and $|F_j\cap E(Y, U\setminus X)|\ge \lceil n/2\rceil$.
\end{claim}

\begin{proof}
Let $F_j\in\F''$. We show that $F_j$ has at most two edges between $X$ and $Y$. By the definition of $\F^*$, there exists $F_i\in\F'$ and an edge $f\in F_i$ such that $f\cap r_j=\{u_j\}\subset U\setminus X$ and the other endpoint $y$ of $f$ is in $Y\setminus\{y_i\}$. Now suppose $F_j$ has three edges between $X$ and $Y$. Then one of them, say $e$, has an endpoint in $Y\setminus\{y_i,y\}$. Now, $R\setminus\{r_i,r_j\}\cup\{f,e_i,e\}$ is a rainbow matching, contradicting the maximality of $R$ (Figure~\ref{fig2b}).
\end{proof}

\begin{figure}[h!]
\begin{center}
\includegraphics[scale=0.2]{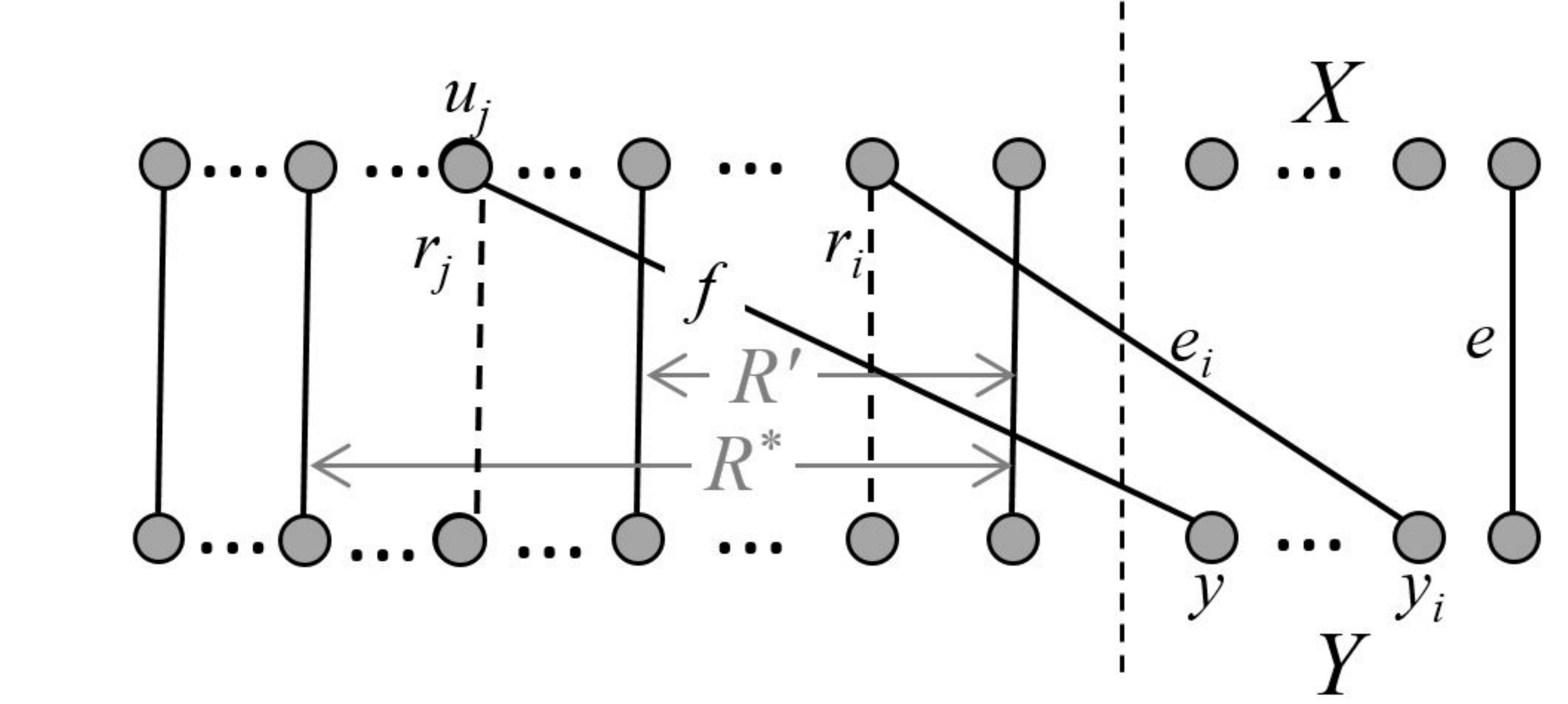}
\end{center}
\caption{}
\label{fig2b}
\end{figure}

\begin{claim}\label{claim:3}
For each $F_i\in\F^*$, $|F_i\cap E(X,W^*)|\ge d+3$.
\end{claim}

\begin{proof}
Since $|R^*|=|R'|+d$, it follows by (\ref{eq1:1}), that
$|R\setminus R^*| = n-1- (\lceil n/2\rceil+2+d)=\lfloor
n/2\rfloor-d-3$.  Let $F_i\in\F^*=\F'\cup\F''$ (disjoint union). Since $F_i\cap E(X,W^*) = F_i\cap E(X,W\setminus Y) \cap R^*$, we have by Claims~\ref{claim:1} and \ref{claim:2}, that $F_i\cap E(X,W^*) \ge \lceil n/2\rceil- (\lfloor n/2\rfloor-d-3)\ge d+3$.
\end{proof}

We shall inductively choose edges $f_1,f_2,\ldots,f_i\in E(X,W^*)$ from distinct $F_j$s and $r_1,r_2,\ldots,r_i, r_{i+1}\in R^*$ from distinct $F_j$s, as follows.
To start the process, we assume, without loss of generality, that $F_1\in\F'$. By Claim~\ref{claim:3}, there exists $f_1\in F_1\cap E(X,W^*)$. Let $w_2$ be the endpoint of $f_1$ in $W^*$, and without loss of generality we may assume that $w_2\in r_2$, where $r_2 \in R^* \cap F_2$. Again, by Claim~\ref{claim:3}, there exists $f_2\in F_2\cap E(X\setminus\{x_1\},W^*)$.
We continue in this manner, choosing at each step an edge $f_i\in E(X,W^*)$, disjoint from all $f_j, ~j <i$, and belonging to the same matching as $r_i$, and the edge $r_{i+1} \in R^*$, such that $f_i\cap r_{i+1}\cap W^*\ne \emptyset$. The process ends when we have obtain a set of disjoint edges $F=\{f_1,f_2,\ldots,f_m\}\subseteq E(X,W^*)$ and a set of distinct edges $P=\{r_1,r_2,\ldots,r_m, r_{m+1}\}\subseteq R^*$ such that $f_i\cap r_{i+1}\cap W^*\ne \emptyset$ for $i=1,\ldots,m$, and for each $i$, $f_i$ and $r_i$ belong to the same matching (without loss of generality we assume that $f_i,r_i\in F_i$ for $i=1,\ldots,m$, and $r_{m+1}\in F_{m+1}$), so that one of two options holds:
\begin{enumerate}
  \item [(1)] $m<d+3$ and the matching $F_{m+1}$ has an edge $f_{m+1}\in E(X\setminus(f_1\cup f_2\cup\ldots\cup f_m),W^*)$ such $f_{m+1}\cap r_t\cap W^*\ne\emptyset$ for some $t\in\{1,\ldots,m\}$, or
  \item [(2)] $m= d+3$.
\end{enumerate}
(Note that by Claim~\ref{claim:3} one of these two options must hold.)

  In Case (1) the partial rainbow matching $R$ can be augmented as follows:
  If $F_i\in\F'$ for some $i\in\{t,\dots,m+1\}$, then $\left(R\setminus\{r_t,\ldots,r_{m+1}\}\right)\cup \{f_t,\dots,f_{m+1},e_i\}$ is a full rainbow matching (Figure~\ref{fig3a}).
  If $F_i\in\F''$ for all $i\in\{t,\dots,i+1\}$, then, by the definition of $\F^*$, there exists $F_j\in\F'$ and an edge $e\in F_j^Y$ so that $e\cap r_t\in U^*$. In this case $\left(R\setminus\{r_t,\ldots,r_{m+1},r_j\}\right)\cup \{f_t,\dots,f_{m+1},e,e_j\}$ is a full rainbow matching (Figure~\ref{fig3b}). (Note that $e$ and $e_j$ are disjoint by the definition of $F_j^Y$.)

 \begin{figure}[h!]
  \centering
  \subfigure[]{\label{fig3a}\includegraphics[scale=0.2]{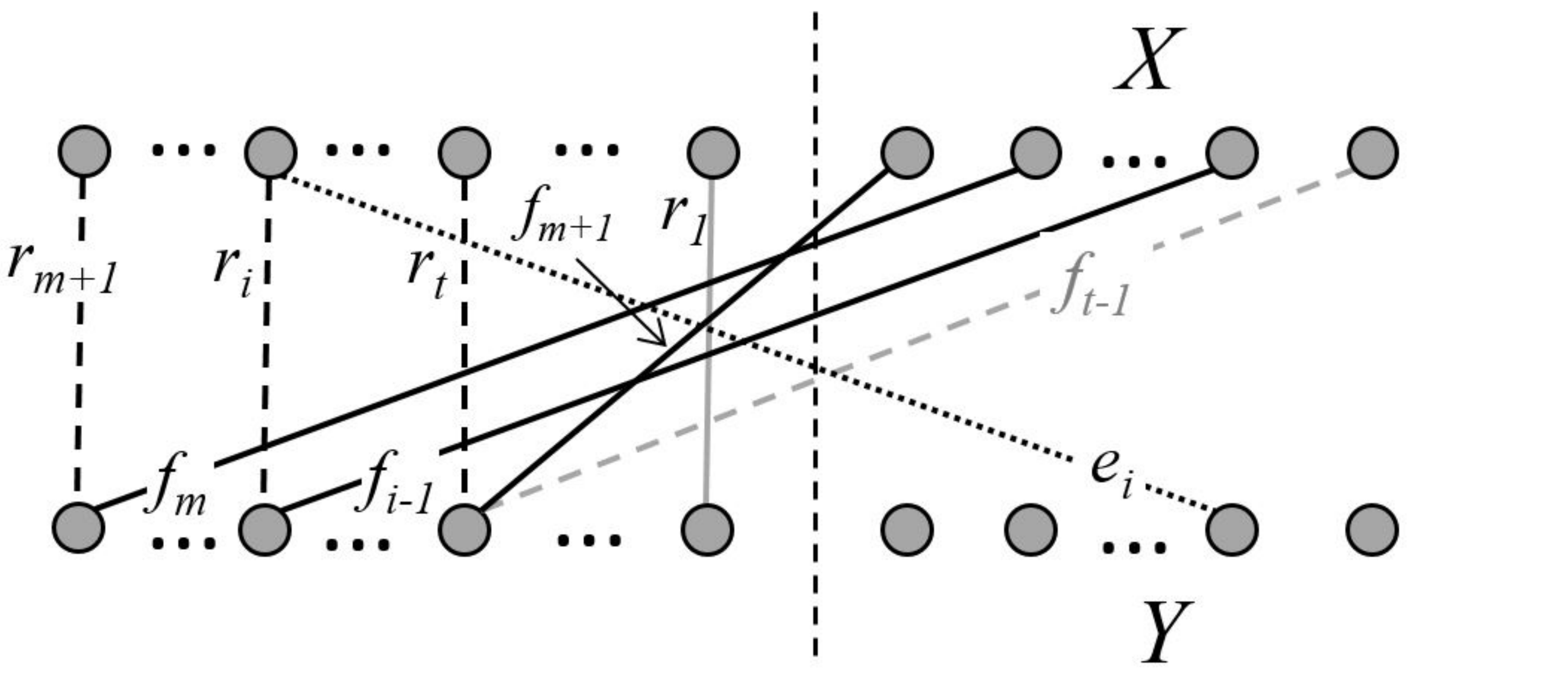}}
  \subfigure[]{\label{fig3b}\includegraphics[scale=0.2]{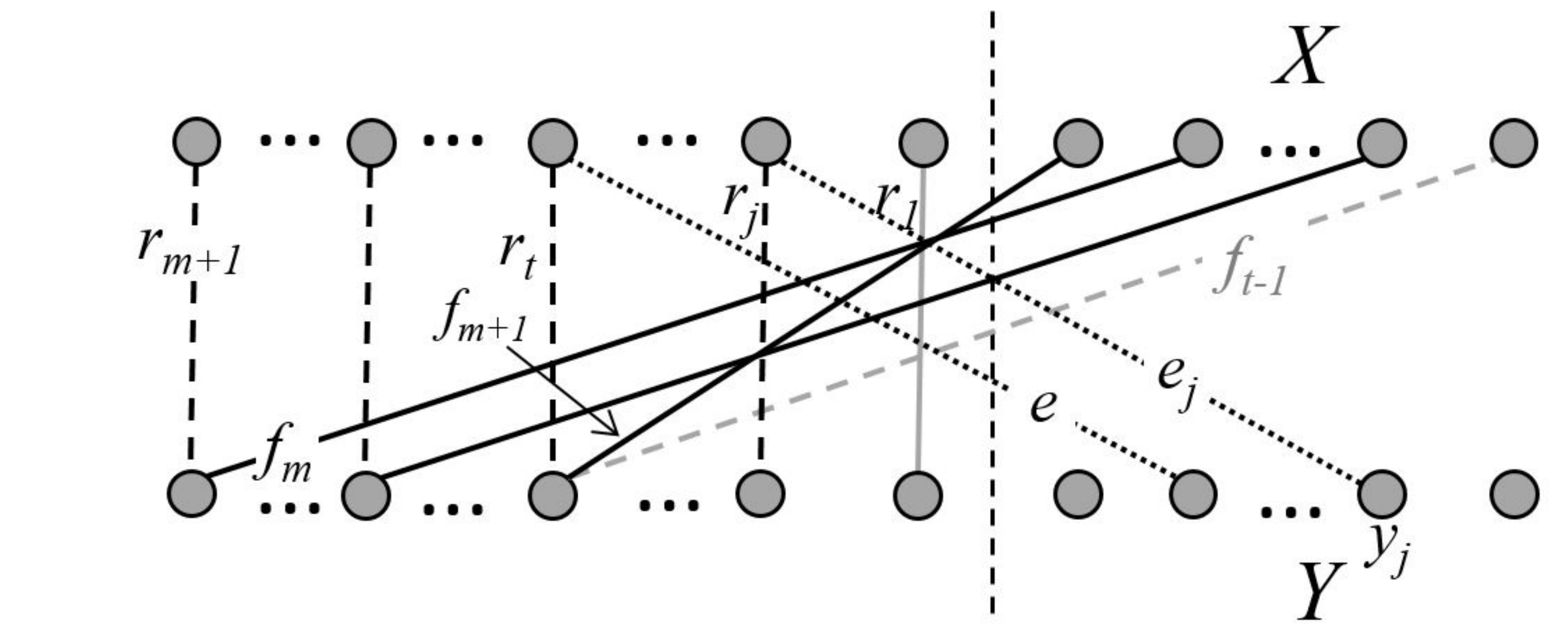}}
  \caption{}
  \label{fig4}
\end{figure}

  In Case (2) let $Q=\left(R\setminus P\right)\cup F$. Then, $Q$ is a partial rainbow matching of size $n-2$, since it excludes the matchings $F_{m+1}$ and $F_n$. We shall augment $Q$ with edges two edges in $E(Y,U^*)$, from $F_{m+1}$ and $F_n$ respectively.

\begin{claim}\label{claim:4}
If $F_i\in\F'$, then the size of the set $\{e\in F_i^Y: e\cap (\cup_{j=1}^m r_j)\ne \emptyset\}$ is at least 2.
\end{claim}

\begin{proof}
Let $U^i$ be the set of endpoints in $U\setminus X$ of the edges in $F_i^Y$. Note that $|U^i|\ge \ell-1$ (Claim~\ref{claim:1} and \eqref{eq1:0}), $U^i\subset U^*$ (since $F_i\in \F'$), and $|U^*|=\ell+d$. Recall that for each edge $r_j\in R\cap F_j$ its endpoint in $U\setminus X$ was denoted $u_j$. Since $|U^*\setminus\{u_1,\ldots,u_{m}\}|=\ell+d-m=\ell+d-(d+3)=\ell-3$, the claim follows.
\end{proof}

 There are two sub-cases to consider: (2a) $F_{m+1}\in \F'$, and (2b) $F_{m+1}\in \F''$.

   (2a) Assume $F_{m+1}\in \F'$. By Claim~\ref{claim:4}, there exists and edge $e \in F_{m+1}$ connecting a vertex in $Y\setminus\{y_{m+1}\}$ with some $u_t$, which is the endpoint in $U$ of some $r_t\in P\setminus\{r_{m+1}\}$. Since $m= d+3$ and $|P|=m+1= d+4$, at least four of the edges in $P$ are in $R'$ (actually, three are enough in this case). For at least one of these four edges, say $r_i$, its corresponding $e_i$ (the edge of $F_n^Y$ meeting $r_i$ in $U$) avoids both endpoints of $e$. Then, $Q\cup\{e,e_i\}$ is a rainbow matching of size $n$ (Figure~\ref{fig4a}).

 (2b) Assume $F_{m+1}\in \F''$ By Claim~\ref{claim:2}, $|F_{m+1}^Y|\ge\lceil n/2\rceil$. Since by \eqref{eq1:1} we have $|R\setminus R'| = n-1-(\lceil n/2\rceil+2)=\lfloor n/2\rfloor-3$, there is an edge $e\in F_{m+1}^Y$ sharing an endpoint with an edge $r_s\in R'$. Assume first that $s\in \{1,\ldots,m\}$. As in the previous paragraph, there exists $e_i$ disjoint from $r_s$ and $e$, so that $Q\cup\{e,e_i\}$ is a rainbow matching of size $n$. Now assume that $s\not\in \{1,\ldots,m\}$ and let again $e$ be the edge of $F_{m+1}$ sharing an endpoint with $r_s$. Since $r_s\in R'$, there exists, by Claim~\ref{claim:4}, an edge $e'\in F_s^Y$, disjoint from $e$, sharing an endpoint with some $r_t$ with $t\in\{1,\ldots,m\}$. Since $|P\cap R'|\ge 4$, there exists an edge $e_i\in F_n^Y$, avoiding both endpoints of $e'$ and the endpoint of $e$ in $Y$, such that $u_i\in\{u_1,\ldots,u_{m+1}\}$. Then, $Q\setminus\{r_t\}\cup\{e,e',e_i\}$ is a rainbow matching of size $n$ (Figure~\ref{fig4b}). This completes the proof.

 \begin{figure}[h!]
  \centering
  \subfigure[]{\label{fig4a}\includegraphics[scale=0.2]{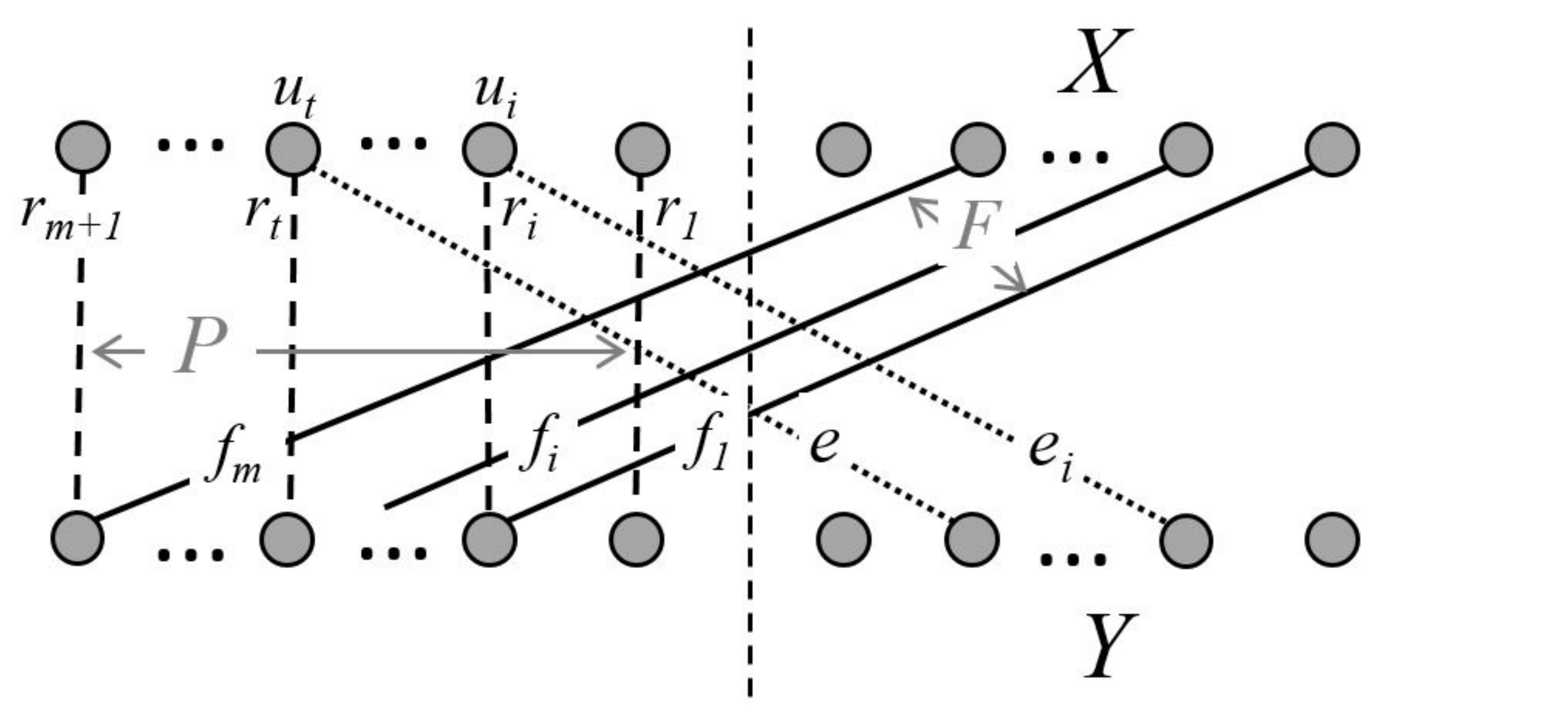}}
  \subfigure[]{\label{fig4b}\includegraphics[scale=0.2]{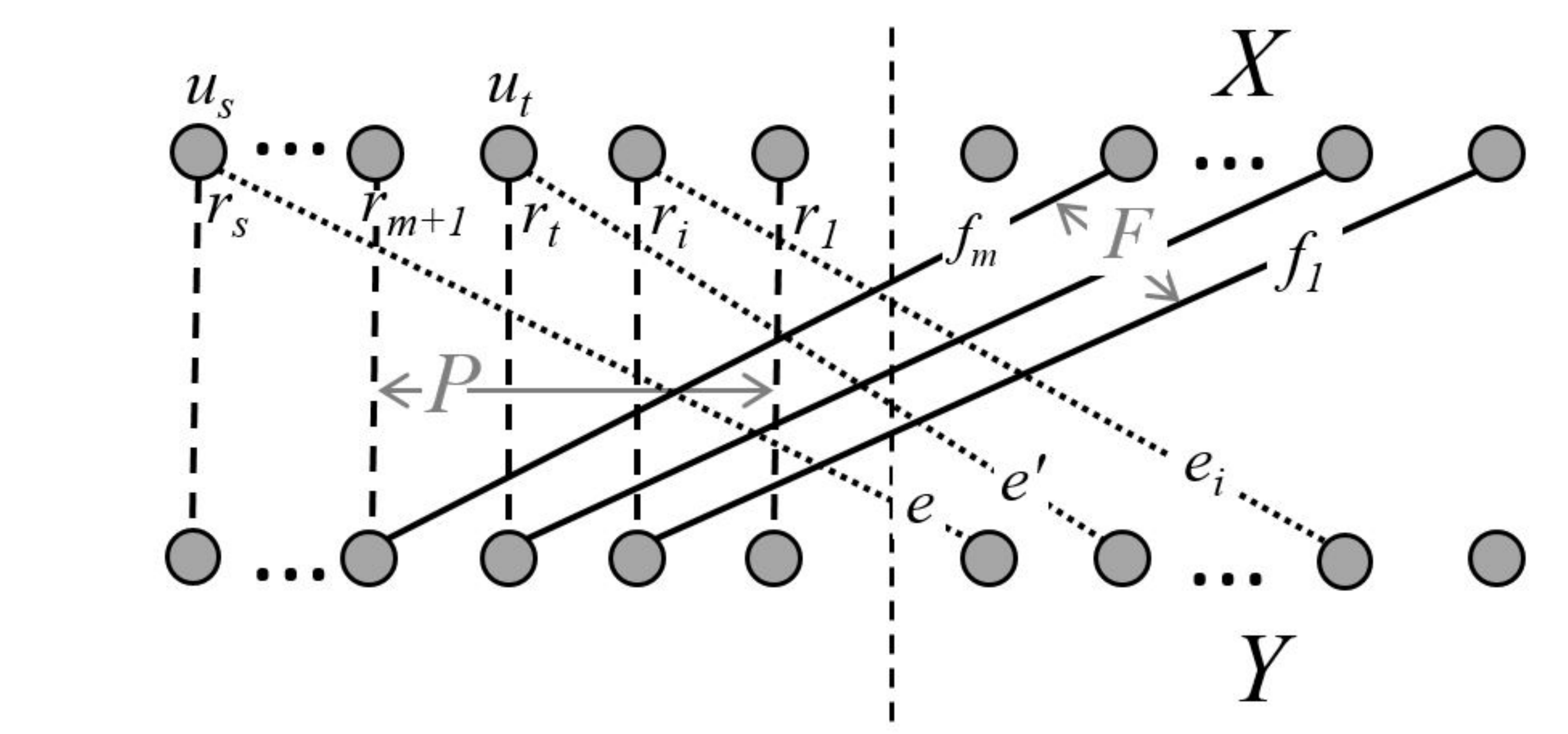}}
  \caption{}
  \label{fig4}
\end{figure}

\end{proof}


\providecommand{\bysame}{\leavevmode\hbox to3em{\hrulefill}\thinspace}
\providecommand{\MR}{\relax\ifhmode\unskip\space\fi MR }
\providecommand{\MRhref}[2]{%
  \href{http://www.ams.org/mathscinet-getitem?mr=#1}{#2}
}
\providecommand{\href}[2]{#2}

\end{document}